\theoremstyle{plain}  
\newtheorem*{acknowledgements}{Acknowledgements}
\newtheorem{theorem}{\bf Theorem}
\newtheorem{lemma}{\bf Lemma}
\newtheorem{prop}{\bf Proposition}
\newcommand{\g}{\gamma}
\begin{document}
\title{Univalence of $T$-symmetric Suffridge type polynomials \\of degree $3T+1$}
\author{Kristina Oganesyan}
\address{Lomonosov Moscow State University, Moscow Center for Fundamental and Applied Mathematics, Centre de Recerca Matem\`atica, Universitat Aut\`onoma de Barcelona}
\email{oganchris@gmail.com}
\thanks{This research was supported by the Russian Science Foundation (project no. 22-21-00545).}
\date{}

\begin{abstract}
We show the univalence of $T$-symmetric Suffridge type polynomials $S_4^{(T)}$ in the unit disk, confirming thereby the conjecture proposed by Dmitrishin, Gray, and Stokolos in their recent paper. The result also implies the quasi-extremality of $S_n^{(T)}$ in the sense of Ruscheweyh.
\end{abstract}
\keywords{Suffridge polynomials, univalence of a function, polynomial symmetrization.}
\subjclass[2010]{30C10, 30C55}
\maketitle

\section{Introduction}
An important family of univalent in $\mathbb{D}$ polynomials, which posseses numerous extremal properties, is that of Suffridge \cite{S} defined by
\begin{align*}
S_{k,N}(z):=z+\sum_{j=2}^N\Big(1-\frac{j-1}{N}\Big)\frac{\sin\frac{\pi kj}{N+1}}{\sin\frac{\pi k}{N+1}}z^j,\quad k=1,2,...,N.
\end{align*}
First of all, one can see that $S_{k,N}$ have the maximal possible modulus of the leading coefficient provided that the first coefficient is $1$. Indeed, a polynomial of the form $z+\sum_{n=2}^Na_nz^n$ with $|a_N|>1/N$ has a zero in $\mathbb{D}$, which means that there is no univalence. Besides, the Suffridge polynomials are quasi-extremal in the sense of Ruscheweyh \cite[Def. 11]{Ru} (this concept is strongly related to the maximal range problem for polynomials, see \cite{CR1,CR2}). The polynomials $S_{1,N}(z)$ are also known to maximize the absolute value of each coefficient of univalent in $\mathbb{D}$ polynomials of degree $N$ with the first coefficient being $1$. Moreover, $S_{1,N}$ minimizes the value $|P(-1)/P(1)|$ among all the univalent in $\mathbb{D}$ polynomials $P$ of degree $N$ with real coefficients and attains in $\overline{\mathbb{D}}$ the maximal possible absolute value of $P$ in the same class under the additional assumption $P(0)=0,\;P'(0)=1$. 

The following class of $T$-symmetric polynomials with fold symmetry was introduced in \cite{DST} as a cantidate to inherit the properties of the Suffridge polynomials: 
\begin{align*}
S_{n}^{(T)}(z):=z+\sum_{j=2}^n\Big(1-\frac{(j-1)T}{1+(n-1)T}\Big)\prod_{k=1}^{j-1}\frac{\sin\frac{\pi(2+T(k-1))}{2+T(n-1)}}{\sin\frac{\pi Tk}{2+T(n-1)}}z^{T(j-1)+1}.
\end{align*}
Note that the leading coefficient of $S_n^{(T)}$ once again is the maximal possible for a univalent in $\mathbb{D}$ polynomial of degree $T(n-1)+1$ with the first coefficient being $1$.
 For a comprehensive surwey on extremal problems for the coefficients of $T$-symmetric univalent in $\mathbb{D}$ functions, we refer the reader to \cite{DGS}.
 
In \cite{DGS}, a number of conjectures concerning the extremal features of $S_n^{(T)}$, similar to those known for $S_{1,N}$, was launched. We prove that the hypotheses a) and b) in \cite[Sec. 2]{DGS} are indeed true for $S_4^{(T)}$. Namely, we show that $S_{4}^{(T)}$ are univalent in $\mathbb{D}$ and quasi-extremal in the sense of Ruscheweyh.

\section{Results and proofs}
\begin{theorem}\label{univ}
The polynomials $S_{4}^{(T)}$ are univalent in the unit disk.
\end{theorem}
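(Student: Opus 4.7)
My plan is to prove univalence of $S_4^{(T)}$ in $\mathbb{D}$ by verifying that the boundary map $\theta\mapsto S_4^{(T)}(e^{i\theta})$ parametrizes a simple closed curve; by the argument principle this is already enough to conclude that $S_4^{(T)}$ is univalent on $\overline{\mathbb{D}}$. As a preparatory step of independent interest, I would also check that $(S_4^{(T)})'$ does not vanish in $\mathbb{D}$, which gives local injectivity and tends to make the boundary analysis cleaner.

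First I would simplify the coefficients of $S_4^{(T)}$ using the identities $\sin(3T\phi)=\sin(2\phi)$ and $\sin((T+2)\phi)=\sin(2T\phi)$, which follow from $3T\phi+2\phi=\pi=(T+2)\phi+2T\phi$ when $\phi=\pi/(3T+2)$. The polynomial then collapses to the form $S_4^{(T)}(z)=zQ(z^T)$ with $Q(w)=1+Aw+Bw^2+Cw^3$ a cubic whose coefficients $A,B,C$ are explicit positive sine-ratios in $\phi$. Because $(S_4^{(T)})'(z)$ is a polynomial in $w=z^T$, the non-vanishing of the derivative in $\mathbb{D}$ reduces to showing that a specific cubic $R(w)$ --- obtained from $Q$ by multiplying its $j$-th coefficient by $jT+1$ --- has all its roots in $|w|\ge 1$. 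This is a Schur--Cohn test for a single cubic and translates into a pair of scalar inequalities in the sine-ratios.

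For the boundary injectivity, the $T$-fold symmetry $S_4^{(T)}(\omega z)=\omega S_4^{(T)}(z)$ for $\omega^T=1$ reduces the question on $\partial\mathbb{D}$ to injectivity on a single fundamental arc of length $2\pi/T$, together with the correct rotational matching of the endpoints. The most natural route is to show that the tangent direction $\arg\bigl[ie^{i\theta}(S_4^{(T)})'(e^{i\theta})\bigr]$ is strictly monotone in $\theta$ on that arc; this produces a curve of monotonically turning tangent and hence a Jordan curve. Monotonicity of the tangent angle is a positivity condition on an explicit real-valued trigonometric polynomial in $\theta$ whose coefficients depend on $T$ through the same sine-ratios as above.

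The main obstacle, in both steps, is uniformity in the parameter $T\ge 1$. I expect both the Schur--Cohn inequality for $R$ and the boundary positivity condition to be essentially sharp, which is consistent with $S_4^{(T)}$ being an extremal polynomial whose critical values should lie on the image of $\partial\mathbb{D}$. A workable plan is to reparametrize through $\phi\in(0,\pi/5]$, split the analysis into regimes (small $\phi$, i.e.\ large $T$, versus $\phi$ near $\pi/5$, i.e.\ small $T$), carry out elementary trigonometric estimates in each regime, and if necessary verify a handful of transitional integer values of $T$ by direct computation.
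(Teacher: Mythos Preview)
Your plan has a structural gap that is specific to these extremal polynomials.  Every zero of $(S_4^{(T)})'$ lies \emph{on} the unit circle: one computes
\[
(S_4^{(T)})'(z)=(1+z^T)\bigl(1+az^T+z^{2T}\bigr),\qquad a=\frac{2T+1}{3T+1}\,v_T-1\in(-1,2),
\]
so there are exactly $3T$ simple critical points on $\partial\mathbb{D}$ and none inside.  This is precisely the quasi-extremality in the sense of Ruscheweyh that the paper establishes afterwards, so it cannot be avoided.  Consequently the boundary curve $\theta\mapsto S_4^{(T)}(e^{i\theta})$ has $3T$ cusps, and the quantity $\arg\bigl[ie^{i\theta}(S_4^{(T)})'(e^{i\theta})\bigr]$ is undefined at these points.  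On the smooth pieces one does get $\mathrm{Re}\,[1+zS''/S']=1+3T/2>0$, but ``monotone tangent between cusps'' is far from ``Jordan curve'': a flower-shaped curve with overlapping petals has exactly this behaviour.  So your proposed positivity criterion is both true and insufficient.

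The symmetry reduction is also incomplete.  From $S(\omega z)=\omega S(z)$ you can conclude that injectivity on one fundamental arc rules out collisions of points in the \emph{same} $\mathbb{Z}/T$-orbit, but it says nothing about collisions between different orbits; you would still have to prove that the image of one arc is disjoint from each of its nontrivial rotates, which is a global statement of the same strength as the original problem.  The paper does not attempt any such reduction.  Instead it writes the collision $S_4^{(T)}(e^{ix})=S_4^{(T)}(e^{iy})$ as a divided difference, obtaining a genuine cubic
\[
f_\gamma(w)=1+\frac{(2T+1)\sin(T+1)\gamma}{(3T+1)\sin\gamma}\,a_Tw+\frac{(T+1)\sin(2T+1)\gamma}{(3T+1)\sin\gamma}\,a_Tw^2+\frac{\sin(3T+1)\gamma}{(3T+1)\sin\gamma}\,w^3
\]
in $w=e^{iT(x+y)/2}$, $\gamma=(x-y)/2$, and then shows that $f_\gamma$ has no root on $|w|=1$.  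Because the coefficients are real, this splits into the three cases $w=1$, $w=-1$, and a conjugate pair on $|w|=1$ (equivalently, $w$ equals the real root $-(3T+1)\sin\gamma/\sin(3T+1)\gamma$ coming from Vieta).  Each case is a one-variable sign problem in $\gamma$, handled by locating critical points and by the uniform bounds $u_T<2.5$, $v_T<(0.345)^{-1}$ of the preparatory lemma.  That case analysis is where the real work lies, and your outline does not reach it.
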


First of all, for the sake of convenience we adopt the notation
\begin{align*}
a_T:=\frac{\sin\frac{2\pi}{3T+2}}{\sin\frac{T\pi}{3T+2}},\quad u_T:=Ta_T=T\frac{\sin\frac{2\pi}{3T+2}}{\sin\frac{T\pi}{3T+2}},\quad v_T:=(T+1)a_T=(T+1)\frac{\sin\frac{2\pi}{3T+2}}{\sin\frac{T\pi}{3T+2}},
\end{align*}
so that for $n=4$, the polynomials $S_n^{(T)}$ take the form
\begin{align*}
S_4^{(T)}(z)=z+\frac{2T+1}{3T+1}a_Tz^{T+1}+\frac{T+1}{3T+1}a_Tz^{2T+1}+\frac{1}{3T+1}z^{3T+1}.
\end{align*}

We start with an auxiliary result which bounds $u_T$ and $v_T$ from above. 

\begin{lemma}\label{imp}
For $T\geq 3$, $u_T$ increases and $v_T$ decreases in $T$, moreover, there holds
\begin{align*}
u_T< 2.5,\quad v_T< (0.345)^{-1}.
\end{align*}
\end{lemma}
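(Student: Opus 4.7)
The plan is to treat $T$ as a continuous real parameter and reduce both monotonicity claims to elementary trigonometric inequalities on the short interval corresponding to $T\geq 3$. The key algebraic step is to observe that $3\cdot\frac{T\pi}{3T+2}+\frac{2\pi}{3T+2}=\pi$, so $\sin\frac{2\pi}{3T+2}=\sin(3\theta)$ with $\theta:=\frac{T\pi}{3T+2}$. Setting $\psi:=\frac{\pi}{3}-\theta=\frac{2\pi}{3(3T+2)}$ and applying the triple-angle identity gives $a_T=4\sin\psi\sin(\psi+\pi/3)$. Combined with $T=\frac{2(\pi-3\psi)}{9\psi}$ and $T+1=\frac{2\pi+3\psi}{9\psi}$, this yields
\begin{align*}
u_T=\frac{8(\pi-3\psi)\sin\psi\sin(\psi+\pi/3)}{9\psi},\qquad v_T=\frac{4(2\pi+3\psi)\sin\psi\sin(\psi+\pi/3)}{9\psi}.
\end{align*}
The range $T\geq 3$ corresponds to $\psi\in(0,2\pi/33]$, and $T$ increasing corresponds to $\psi$ decreasing.

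For the monotonicity of $u_T$, the logarithmic derivative in $\psi$ equals
\begin{align*}
\Bigl(\cot\psi-\frac{1}{\psi}\Bigr)+\Bigl(\cot(\psi+\pi/3)-\frac{1}{\pi/3-\psi}\Bigr),
\end{align*}
and I would make each bracket negative: the first by the classical inequality $\cot z<1/z$ on $(0,\pi)$, the second by the chain $\cot(\psi+\pi/3)<\frac{1}{\psi+\pi/3}\leq\frac{1}{\pi/3-\psi}$, where the last step is simply $\psi\geq 0$. For $v_T$ the analogous derivative is
\begin{align*}
\Bigl(\cot(\psi+\pi/3)+\frac{1}{2\pi/3+\psi}\Bigr)-\Bigl(\frac{1}{\psi}-\cot\psi\Bigr),
\end{align*}
and positivity must be argued quantitatively. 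I would bound the negative piece using $\sin\psi-\psi\cos\psi=\int_0^\psi t\sin t\,dt\leq\psi^3/3$ together with Jordan's inequality $\sin\psi\geq 2\psi/\pi$, yielding $\frac{1}{\psi}-\cot\psi\leq\pi\psi/6\leq\pi^2/99$ on our interval; meanwhile, even discarding $\cot(\psi+\pi/3)$ leaves $\frac{1}{2\pi/3+\psi}\geq\frac{11}{8\pi}$, and $\frac{11}{8\pi}>\frac{\pi^2}{99}$ reduces to $8\pi^3<1089$, which is easy from $\pi^3<32$.

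The numerical bounds then follow from endpoint evaluation. Since $u_T$ is increasing, $u_T<\lim_{\psi\to 0^+}u_T=\frac{4\pi}{3\sqrt{3}}$, and $\frac{4\pi}{3\sqrt{3}}<\frac{5}{2}$ amounts to $64\pi^2<675$, which holds. Since $v_T$ is decreasing, $v_T\leq v_3$; using $\frac{\sin 2x}{\sin 3x}=\frac{2\cos x}{4\cos^2 x-1}$ gives $v_3=\frac{8c}{4c^2-1}$ with $c:=\cos(\pi/11)$, so $v_3<(0.345)^{-1}$ becomes $100c^2-69c-25>0$, whose positive root is approximately $0.9525$; the elementary bound $c\geq 1-\frac{1}{2}(\pi/11)^2>0.9525$ (using $\pi^2<10$) finishes the estimate. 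The main obstacle is the mixed-sign analysis for $v_T$, but on the short interval $(0,2\pi/33]$ the crude positivity argument above works comfortably.
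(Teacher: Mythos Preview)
Your proof is correct and takes a genuinely different route from the paper's.

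The paper differentiates $1/u_T$ and $(T+1)\sin\frac{2\pi}{3T+2}$ directly in $T$, relying on the auxiliary monotonicity of $\tan(kx)/k$ in $k$ for $u_T$, and on the ad hoc inequality $\tan x<x(1+x/2\pi)$ on $x\le 2\pi/17$ for $v_T$; even so, the $v_T$ argument only covers $T\ge 5$, and the cases $T=3,4,5$ are dispatched by direct numerical check. Your substitution $\psi=\frac{2\pi}{3(3T+2)}$ together with the identity $a_T=\sin 3\theta/\sin\theta=4\sin\psi\,\sin(\psi+\pi/3)$ turns both $u_T$ and $v_T$ into products of elementary factors whose logarithmic derivatives are immediately tractable. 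The $u_T$ monotonicity then reduces to two applications of $\cot z<1/z$, and the $v_T$ monotonicity to a single quantitative comparison $\frac{11}{8\pi}>\frac{\pi^2}{99}$ on the short interval $(0,2\pi/33]$, with no need for a case split at small $T$. For the numerical bound on $v_3$, the paper simply asserts $v_3\le(0.345)^{-1}$, whereas your reduction via $\frac{\sin 2x}{\sin 3x}=\frac{2\cos x}{4\cos^2x-1}$ to the quadratic inequality $100c^2-69c-25>0$ with $c=\cos(\pi/11)$ makes the verification transparent. The paper's approach is more direct but piecemeal; yours buys a uniform treatment at the cost of the initial (clever) reparametrisation. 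One small point worth making explicit in your write-up: when you discard $\cot(\psi+\pi/3)$ in the $v_T$ estimate you are using that $\psi+\pi/3\le 13\pi/33<\pi/2$, so this term is indeed positive.
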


\begin{proof}
We start with $u_T$. We have
\begin{align}\label{rhs}
&\frac{(3T^2+2T)^2\sin^2\frac{2\pi}{3T+2}}{2\pi\cos\frac{2\pi}{3T+2}\cos\frac{T\pi}{3T+2}}\Big(\frac{1}{u_T}\Big)_T'=T\tan\frac{2\pi}{3T+2}-\tan\frac{T\pi}{3T+2}\Big(-3T+\frac{(3T+2)^2}{2\pi}\tan\frac{2\pi}{3T+2}\Big).
\end{align}
Note that the function $\tan(kx)/k$ is increasing in $k\in(0,\pi/2x)$, since 
\begin{align*}
\Big(\frac{\tan kx}{k}\Big)'_k=\frac{2kx-\sin 2kx}{2k^2\cos^2kx}>0,
\end{align*}
whence the right-hand side of \eqref{rhs} does not exceed
\begin{align*}
T\tan\frac{2\pi}{3T+2}&\Big(1-0.5\Big(-3T+\frac{(3T+2)^2}{2\pi}\tan\frac{2\pi}{3T+2}\Big)\Big)\\
&<0.5T\tan\frac{2\pi}{3T+2}(2-(-3T+(3T+2)))=0.
\end{align*}
Thus, $u_T$ increases and
\begin{align*}
u_T\leq \lim_{T\to\infty}u_T=\frac{4\pi}{3\sqrt{3}}<0.4.
\end{align*}

Turn now to $v_T$. For $T\leq 5$, the monotonicity can be checked by direct computations, so we assume that $T\geq 5$. Note that $\sin\frac{T\pi}{3T+2}$ increases, therefore it suffices to show that $(T+1)\sin\frac{2\pi}{3T+2}$ decreases. 

Note that for $x\leq 2\pi/17$ there holds $\tan x<x(1+x/2\pi)$. Indeed, it is enough to verify that
\begin{align*}
(\tan x)'=\cos^{-2}x\leq (1-x^2)^{-1}<1+\frac{x}{\pi}=\Big(x\Big(1+\frac{x}{2\pi}\Big)\Big)'.
\end{align*}
Hence,
\begin{align*}
\Big((T+1)\sin\frac{2\pi}{3T+2}\Big)'_T=\sin\frac{2\pi}{3T+2}-\frac{6\pi(T+1)}{(3T+2)^2}\cos\frac{2\pi}{3T+2}<0,
\end{align*}
which implies the monotonicity of $v_T$. To conclude the proof, it remains to observe that $v'_{3}\leq (0.345)^{-1}$.
\end{proof}

\begin{proof}[Proof of Theorem \ref{univ}] 

Note that we can assume $T\geq 3$, as for $T=1,2$, the univalence of $S_n^{(T)}$ is already known, since $S_n^{(1)}(z)\equiv S_{1,n}(z),\;S_n^{(2)}(z)\equiv -iS_{n,2n-1}(iz)$. 

As we deal with polynomials, in order to prove the univalence it suffices to show that the image of the unit circle is a simple curve. Assume the contrary, so that there holds $S_4^{(T)}(e^{ix})=S_4^{(T)}(e^{iy})$ for some $x> y,\;x,y\in[0,2\pi)$. Then
\begin{align*}
0=\frac{S_4^{(T)}(e^{ix})-S_4^{(T)}(e^{iy})}{e^{ix}-e^{iy}}
&=1+\frac{(2T+1)\sin (T+1)\g}{(3T+1)\sin\g}a_Tw+\frac{(T+1)\sin (2T+1)\g}{(3T+1)\sin\g}a_Tw^2\\
&+\frac{\sin (3T+1)\g}{(3T+1)\sin\g}w^3=:f_{\g}(w),
\end{align*}
where $\g:=(x-y)/2,\;w:=\exp(iT(x+y)/2)$. This means that for some $\g$, the polynomial $f_{\g}$ has a root of modulus $1$, which in turn yields that either $f_{\g}(\pm 1)=0$ or $f_{\g}$ has a pair of conjugate complex roots of modulus $1$. Note that if $\sin (3T+1)\g=0$, then $\g=\pi k/(3T+1)$ for some $0<k<3T+1$, and due to the fact that $2T+1$ and $3T+1$ are coprime, we have $\sin(2T+1)\g\neq 0$. So, in this case $f_{\g}$ becomes a quadratic polynomial with the absolute value of the product of its roots being 
\begin{align*}
\frac{(3T+1)\sin \g}{v_T|\sin(2T+1)\g|}=\frac{(3T+1)\sin\frac{\pi k}{3T+1}}{v_T|\sin\frac{T\pi k}{3T+1}|}\geq \frac{3T+1}{3T}>1,
\end{align*}
according to Lemma \ref{imp}. Thus, either $f_{\g}(\pm 1)=0$ or $f_{\g}(-(3T+1)\sin\g/\sin (3T+1)\g)=0$ provided that $\sin(3T+1)\g\neq 0$. Consider the three cases.

{\bf Case 1. $f_{\g}(1)=0$.} For some $\g\in (0,\pi)$, we have
\begin{align*}
G_1(\g):&=\Big(\sin \g+\frac{1}{3T+1}\sin (3T+1)\g\Big)\sin\frac{T\pi}{3T+2}\\
&+\Big(\frac{2T+1}{3T+1}\sin (T+1)\g+\frac{T+1}{3T+1}\sin (2T+1)\g\Big)\sin\frac{2\pi}{3T+2}=0.
\end{align*}
Note that $G_1(0)=G_1(\pi)=0$. Thus, if $G_1$ vanishes at some $\g\in(0,\pi)$, there exists a local extremum of $G_1$, where $G_1$ is nonpositive. At a local extremum there must hold
\begin{align*}
G'_1(\g)=2\cos\frac{3T+2}{2}\g\Big(\cos\frac{3T}{2}\g\;\sin\frac{T\pi}{3T+2}+\frac{(2T+1)(T+1)}{3T+1}\cos\frac{T}{2}\g\;\sin\frac{2\pi}{3T+2}\Big)=0.
\end{align*}
Assume first that $\cos((3T+2)\g/2)=0$. In this case $\g=(2k+1)\pi/(3T+2)$ for some $0\leq k<(3T+1)/2$, whence
\begin{align}\label{1}
\frac{3T+1}{3T+2}G_1(\g)=\sin\frac{(2k+1)\pi}{3T+2}\sin\frac{T\pi}{3T+2}+\sin\frac{(T+1)(2k+1)\pi}{3T+2}\sin \frac{2\pi}{3T+2}.
\end{align}
The first term in \eqref{1} is positive, so that $G_1(\g)\leq 0$ only if the second one is negative. In the latter case there holds $2k+1\geq 3$. We have several possibilities.

1. If $\sin \g=\sin(\pi/(3T+2))$, then $2k+1=3T+1,$ which yields that $T$ is even, so that the second term in \eqref{1} is positive. 

2. If $\sin \g=\sin(2\pi/(3T+2))$, then there must hold $\g=3T\pi/(3T+2)$, so that $2k+1=3T$, which implies that $T$ is odd. In the latter occasion we get $G_1(\gamma)=0$. 

3. If $\sin \g=\sin(3\pi/(3T+2))$, then the sine in the second term is up to a sign equal to $\sin(3(T+1)\pi)/(3T+2)=\sin(\pi/(3T+2))$. This implies that the second term in \eqref{1} is less than the first one in absolute value, so that the whole sum is positive.

4. If $\sin \g\geq \sin(4\pi/(3T+2))$, then
\begin{align*}
G_1(\g)\geq \sin\frac{4\pi}{3T+2}\sin\frac{T\pi}{3T+2}-\sin\frac{2\pi}{3T+2}\geq \Big(2\cos\frac{2\pi}{11}\sin\frac{3\pi}{11}-1\Big)\sin\frac{2\pi}{3T+2}>0.
\end{align*}
Combining the arguments above, we see that we have $G_1(\g)\geq 0$ at all the local extrema corresponding to $\cos((3T+2)\g/2)=0$, and $G_1(\g)=0$ is only possible for odd $T$ and $\g=3T\pi/(3T+2)$.

Now assume that
\begin{align*}
\cos\frac{3T}{2}\g\;\sin\frac{T\pi}{3T+2}+\frac{(2T+1)(T+1)}{3T+1}\cos\frac{T}{2}\g\;\sin\frac{2\pi}{3T+2}=0,
\end{align*}
whence
\begin{align*}
a_T=-\frac{(3T+1)\cos\frac{3T}{2}\g}{(2T+1)(T+1)\cos\frac{T}{2}\g}.
\end{align*}
Using this we obtain
\begin{align}\label{tbcont}
(3T+1)&\Big(\sin\frac{T\pi}{3T+2}\Big)^{-1}G_1(\g)=2\sin\frac{3T+2}{2}\g\cos\frac{3T}{2}\g+3T\sin\g\nonumber\\
&+2(T+1)a_T\sin\frac{3T+2}{2}\g\cos\frac{T}{2}\g+Ta_T\sin(T+1)\g\nonumber\\
&=T\Big(-\frac{1}{2T+1}2\sin\frac{3T+2}{2}\g\cos\frac{3T}{2}\g+a_T\sin(T+1)\g+3\sin\g\Big)\nonumber\\
&=T\Big(-\frac{1}{2T+1}\sin(3T+1)\g+a_T\sin(T+1)\g+\Big(3-\frac{1}{2T+1}\Big)\sin\g\Big)\nonumber\\
&>T\Big(-\frac{3T+1}{2T+1}\sin\g+a_T\sin(T+1)\g+(3-0.2)\sin\g\Big)\nonumber\\
&>T(a_T\sin(T+1)\g+1.3\sin\g).
\end{align}
Now we need the following
\begin{lemma}\label{third}
For any $y\geq 3,\;x\in (0,\pi/2)$, there holds
\begin{align*}
3\sin xy>-y\sin x.
\end{align*}
\end{lemma}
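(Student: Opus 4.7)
The plan is to substitute $s=xy$ and establish the equivalent statement $F(s,y):=3\sin s+y\sin(s/y)>0$ for $y\geq 3$ and $s\in(0,y\pi/2)$. The key tool is the strict monotonicity in $y$ of the auxiliary map $y\mapsto y\sin(s/y)$ on the regime $s/y\in(0,\pi/2]$; its derivative equals $\sin(s/y)-(s/y)\cos(s/y)$, which is positive thanks to the elementary inequality $\tan\alpha>\alpha$ valid on $(0,\pi/2)$. The argument then splits naturally depending on the size of $s$.

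In the first regime $s\in(0,3\pi/2]$, both $s/y$ and $s/3$ lie in $(0,\pi/2]$ for $y\geq 3$, so the monotonicity yields $y\sin(s/y)\geq 3\sin(s/3)$, reducing the question to
\[
3\sin s+3\sin(s/3)=6\sin(2s/3)\cos(s/3),
\]
where the factorization comes from sum-to-product; both factors are strictly positive for $s\in(0,3\pi/2)$. At the boundary $s=3\pi/2$ the reduction gives only $\geq 0$, but equality there would force $y=3$ and hence $x=\pi/2$, which is excluded; for $y>3$ the monotonicity is strict, so $F>0$ still holds.

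In the second regime $s\in(3\pi/2,y\pi/2)$, which automatically forces $y>3$, I would discard $\sin s$ via the trivial bound $|\sin s|\leq 1$ and show that $y\sin x>3$ alone suffices. Since $x=s/y>3\pi/(2y)$ with $3\pi/(2y)<\pi/2$, monotonicity of sine gives $y\sin x>g(y):=y\sin(3\pi/(2y))$. Writing $\alpha=3\pi/(2y)\in(0,\pi/2)$, one checks $g(3)=3$ and $g'(y)=\sin\alpha-\alpha\cos\alpha>0$ by the same elementary inequality as before, so $g(y)>3$ for $y>3$, completing the proof.

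The main delicate point is the apparent degeneration at $s=3\pi/2$, $y=3$, where $F$ vanishes precisely at the forbidden endpoint $x=\pi/2$. The two-case split above is designed exactly so that strict inequality is preserved throughout the open interior, despite this boundary tangency.
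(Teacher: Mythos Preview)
Your proof is correct and follows a genuinely different route from the paper's. The paper splits on the sign of $\sin(xy)$: when $\sin(xy)<0$ it locates the integer $m$ with $(2m-1)\pi\le xy\le 2m\pi$, sets $z:=\min(2m\pi-xy,\,xy-(2m-1)\pi)$, checks the elementary inequality $z\le xy/3$, and concludes via $3|\sin(xy)|=3\sin z< 3\max(1,z/x)\sin x\le y\sin x$. You instead substitute $s=xy$, identify the monotonicity of $y\mapsto y\sin(s/y)$ (equivalently, that $t\mapsto(\sin t)/t$ decreases) as the organizing principle, and split on whether $s$ exceeds $3\pi/2$. Your argument is conceptually cleaner and makes the boundary degeneration at $s=3\pi/2$, $y=3$ (i.e.\ $x=\pi/2$) fully transparent, whereas in the paper's bound $z\le xy/3$ that same point is exactly the equality case and one must trace where strictness enters. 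The paper's version is a bit shorter and avoids the sum-to-product identity. Both ultimately rest on the same concavity-type fact $\tan\alpha>\alpha$ on $(0,\pi/2)$.
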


\begin{proof}
If $\sin xy\geq 0$, the inequality is trivial. Otherwise, choose $m\in\mathbb{N}$ such that there holds $(2m-1)\pi\leq xy\leq 2m\pi$ and observe that $z:=\min(2m\pi-xy,xy-(2m-1)\pi)\leq xy/3.$ Therefore,
\begin{align*}
3|\sin xy|=3\sin z<3\max\Big(1,\frac{z}{x}\Big)\sin x\leq y\sin x,
\end{align*}
and the claim follows.
\end{proof}

Continuing \eqref{tbcont}, we note that $a_T(T+1)=v_T<3$ by Lemma \ref{imp}, whence by Lemma \ref{third}
\begin{align*}
a_T\sin(T+1)\g+1.3\sin\g\geq \sin\g(-1+1.3)>0.
\end{align*}

Thus, we showed that $G_1(\g)$ is always positive for $\g\in (0,\pi)$ if $T$ is even, and nonnegative otherwise. Moreover, the only point of vanishing of $G_1(\g)$ for even $T$ corresponds to $\g=3T\pi/(3T+2)$. For this value, recalling the definitions of $w$ (which is $1$ under the condition of Case 1) and $\g$, we get
\begin{align*}
x-y=\frac{6\pi T}{3T+2},\;x+y=\frac{4\pi m}{T},
\end{align*}
where $m\in\mathbb{Z}_+$.
 Therefore, as $(x-y)/2<(x+y)/2<2\pi-(x-y)/2$,
\begin{align*}
\frac{3T}{3T+2}<\frac{2m}{T}<2-\frac{3T}{3T+2}
\end{align*}
and
\begin{align*}
T-1<\frac{3T^2}{3T+2}<2m<\frac{(3T+4)T}{3T+2}<T+1,
\end{align*}
which means that we must have $2m=T$. This contradicts the fact that $T$ is odd.

{\bf Case 2. $f_{\g}(-1)=0$.} As above, for some $\g\in (0,\pi)$,
\begin{align*}
G_2(\g):&=\Big(\sin \g-\frac{1}{3T+1}\sin (3T+1)\g\Big)\sin\frac{T\pi}{3T+2}\\
&-\Big(\frac{2T+1}{3T+1}\sin (T+1)\g-\frac{T+1}{3T+1}\sin (2T+1)\g\Big)\sin\frac{2\pi}{3T+2}=0.
\end{align*}
Note that $G_2(0)=G_2(\pi)=0$. As in Case 1, let us estimate $G_1$ at its local extrema. We derive
\begin{align*}
G'_2(\g)=2\sin\frac{3T+2}{2}\g\Big(\sin\frac{3T}{2}\g\;\sin\frac{T\pi}{3T+2}-\frac{(2T+1)(T+1)}{3T+1}\sin\frac{T}{2}\g\;\sin\frac{2\pi}{3T+2}\Big).
\end{align*}
First, assume that $\sin((3T+2)\g/2)=0$. Then $\g=2k\pi/(3T+2)$ for some $0\leq k<(3T+2)/2$, whence
\begin{align}\label{2}
\frac{3T+1}{3T+2}G_2(\g)=\sin\frac{2k\pi}{3T+2}\sin\frac{T\pi}{3T+2}-\sin\frac{(T+1)2k\pi}{3T+2}\sin \frac{2\pi}{3T+2}.
\end{align} 
To have $G_2(\g)<0$, the second term in \eqref{2} must be negative, whence $2k\geq 4$.

1. If $\sin \g=\sin(\pi/(3T+2))$, then $2k=3T+1,$ which yields that $T$ is odd, whence the second term in \eqref{2} is positive.

2. If $\sin \g=\sin(2\pi/(3T+2))$, then there holds $2k=3T$, so that $T$ is even. In the latter occasion we have $G_2(\g)=0$. 

3. If $\sin \g\geq\sin(3\pi/(3T+2))$, then using arguments similar to those in Case 1 we get once again $G_2(\g)>0$.

Thus, we are left to consider $\g$ satisfying
\begin{align*}
\sin\frac{3T}{2}\g\;\sin\frac{T\pi}{3T+2}-\frac{(2T+1)(T+1)}{3T+1}\sin\frac{T}{2}\g\;\sin\frac{2\pi}{3T+2}=0,
\end{align*}
which yields that
\begin{align*}
a_T=\frac{(3T+1)\sin\frac{3T}{2}\g}{(2T+1)(T+1)\sin\frac{T}{2}\g}.
\end{align*}
Using this we derive
\begin{align*}
(3T+1)&\Big(\sin\frac{T\pi}{3T+2}\Big)^{-1}G_2(\g)=-2\cos\frac{3T+2}{2}\g\sin\frac{3T}{2}\g+3T\sin\g\nonumber\\
&+2(T+1)a_T\cos\frac{3T+2}{2}\g\sin\frac{T}{2}\g-Ta_T\sin(T+1)\g\nonumber\\
&=T\Big(\frac{1}{2T+1}2\cos\frac{3T+2}{2}\g\sin\frac{3T}{2}\g-a_T\sin(T+1)\g+3\sin\g\Big)\nonumber\\
&=T\Big(\frac{\sin(3T+1)\g}{2T+1}-a_T\sin(T+1)\g+\Big(3-\frac{1}{2T+1}\Big)\sin\g\Big)=:Th_T(\g).
\end{align*}
Observe that if $T$ is even, then $h_T(\g)=h_T(\pi-\g)$, otherwise, we have, for $\g\leq \pi/2$, by Lemma \ref{third}
\begin{align*}
h(\pi-\g)&=-\frac{\sin(3T+1)\g}{2T+1}+a_T\sin(T+1)\g+\Big(3-\frac{1}{2T+1}\Big)\sin\g\\
&\geq -\frac{3\sin\g}{2}-\frac{v_T\sin\g}{3}+\frac{20\sin\g}{7}>0,
\end{align*}
hence, we can further assume that $\g\in(0,\pi/2]$. 

If $(3T+1)\g\leq \pi-\g$, then by Lemma \ref{imp} we have
\begin{align*}
h_T(\g)\geq \sin\g\Big(\frac{1}{2T+1}-v_T+3-\frac{1}{2T+1}\Big)>0.
\end{align*}
If otherwise $\pi/(3T+2)<\g\leq \pi(3T+1)$, then 
\begin{align*}
(T+1)h_t(\g)\geq -v_T+\frac{6T+2}{2T+1}\cdot\frac{3(T+1)}{3T+2}>-v_T+3>0.
\end{align*}
Now assume that $\g\geq \pi/(2T+1)$, which implies $\sin \g\geq 3/(2T+1)$ and
\begin{align*}
Th_T(\g)\geq -\frac{T}{2T+1}-u_T+\frac{60T}{7(2T+1)}\geq -2.5+\frac{53}{21}>0.
\end{align*}
The only case we are left to consider is that of $\pi/(3T+1)<\g<\pi/(2T+1)$. Denoting then $m:=\pi/\g,\;m\in(2T+1,3T+1),$ and using that $\sin\g\geq 3\g/\pi$ and that $a_T\geq 2.5/T$ by Lemma \ref{imp}, we get 
\begin{align*}
h_T(\g)&\geq -\frac{(3T+1-m)\g}{2T+1}-\frac{2.5}{T}+\frac{6T+2}{2T+1}\cdot\frac{3\g}{\pi}\\
&=\frac{3}{m}\Big(\frac{6T+2}{2T+1}-\frac{\pi}{3}\cdot \frac{3T+1}{2T+1}\Big)-\frac{\pi}{2T+1}-\frac{2.5}{T}.
\end{align*} 
The latter expression decreases in $m$, so it remains to show that it is positive for $\g=\pi/(3T+1)$, which follows from
\begin{align*}
-\frac{2.5}{T}+\frac{6T+2}{2T+1}\cdot\frac{3}{3T+1}=\frac{6T^2-13T-5}{2T(2T+1)(3T+1)}>0.
\end{align*}

Thus, we showed that $G_2(\g)$ is always positive for $\g\in (0,\pi)$ if $T$ is odd, and nonnegative otherwise. Moreover, the only point of vanishing of $G_2(\g)$ for odd $T$ corresponds to $\g=3T\pi/(3T+2)$. However, recalling the definitions of $w$ (which is $-1$ in Case 2) and $\g$, we get

\begin{align*}
x-y=\frac{2\pi(3T+1)}{3T+2},\;x+y=\frac{2\pi (2m+1)}{T},
\end{align*}
where $m\in\mathbb{Z}_+$. 
 Thus,
\begin{align*}
\frac{3T+1}{3T+2}<\frac{2m+1}{T}<2-\frac{3T+1}{3T+2}
\end{align*}
and
\begin{align*}
T-1<\frac{(3T+1)T}{3T+2}<2m+1<\frac{(3T+3)T}{3T+2}<T+1,
\end{align*}
so the unique possibitily is $2m+1=T$, which cannot hold since $T$ is even.

{\bf Case 3. $f_{\g}(-(3T+1)\sin\g/\sin (3T+1)\g)=0$.} We have
\begin{align*}
\tilde{f}(\g):&=\sin^2(3T+1)\g \cdot f_{\g}\Big(-\frac{(3T+1)\sin\g}{\sin(3T+1)\g}\Big)=\sin^2(3T+1)\g-(3T+1)^2\sin^2\g\\
&+v_T\Big(-\frac{2T+1}{T+1}\sin(T+1)\g\sin(3T+1)\g+(3T+1)\sin\g\sin(2T+1)\g\Big).
\end{align*}
For $T=3,4,$ noting that $a_3\leq 0.75$ and $a_4\leq 0.6$, we derive $\tilde{f}(\g)<0$ for $\g\in(0,\pi)$ from the inequalities
\begin{align*}
100\cos x-\cos 10 x-99+0.75(33\cos 3x+7\cos 7x-40\cos 4x)<0
\end{align*}
and
\begin{align*}
169\cos x-\cos 13 x-168+0.6(56\cos 4x+9\cos 9x-65\cos 5x)<0,
\end{align*}
which are true for $x\in(0,\pi)$.

Now we assume that $T\geq 5$. Note that $\tilde{f}(\g)=\tilde{f}(\pi-\g)$, so we can restrict ourselves to $\g\leq \pi/2$. Consider the following four cases.

1. First assume that $\g< \pi/(6T+2)$. Then, letting for simplicity $\xi:=-(3T+1)\sin \g/\sin (3T+1)\g<-1$ and denoting by $\alpha$ the real part of the other two zeros of $f_{\g}$, which according to our assumption are conjugate and have modulus $1$, we get by Vieta's formulas  that
\begin{align*}
\begin{cases}
2\alpha\xi+1=\frac{(2T+1)\sin(T+1)\g}{(T+1)\sin(3T+1)\g}v_T=:a,\\
2\alpha+\xi=-\frac{\sin(2T+1)\g}{\sin(3T+1)\g}v_T=:b.
\end{cases}
\end{align*}
If $\g\geq\pi/(9T+3)$, then $|\xi|\leq \pi/2$ and $\sin(2T+1)/\sin(3T+1)\geq \sin(2\pi/9)/\sin(\pi/3)>0.74$, so by Lemma \ref{imp} $|b|\geq 0.74(T+1)/0.47T>|\xi|$, whence $\alpha<0$. Otherwise, $\g\leq\pi/(9T+3)$, so that $|\xi|\leq 2\pi/3\sqrt{3}<1.21$ and $|b|\geq 2(T+1)(2T+1)/T(3T+1)>4/3>|\xi|$, hence, once again $\alpha<0$. However, $|a|>|b|$, whence there must hold $|2\alpha|>1$, which is impossible since $b-\xi>-1$ by 

\begin{lemma}\label{new}
For any $T\geq 5$, there holds 
\begin{align*}
v_T\sin(2T+1)\g-(3T+1)\sin\g-\sin(3T+1)\g<0
\end{align*}
for all $\g\in(0,\frac{\pi}{6T+2})$.
\end{lemma}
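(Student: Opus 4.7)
The plan is to establish $\phi(\gamma):=(3T+1)\sin\gamma+\sin(3T+1)\gamma-v_T\sin(2T+1)\gamma>0$ on $(0,\pi/(6T+2))$ via Taylor bounds on the three sines. Since $\phi(0)=0$ and $(3T+1)\gamma<\pi/2$ throughout the interval, all three sine arguments are in $(0,\pi/2)$, where the elementary inequalities $\sin x\ge x-x^3/6$ and $\sin x\le x-x^3/6+x^5/120$ hold.

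First I will apply the lower bound to $(3T+1)\sin\gamma$ and $\sin(3T+1)\gamma$ and the upper bound to $\sin(2T+1)\gamma$, obtaining
\[
\phi(\gamma)\ge c_0\gamma-\frac{c_1}{6}\gamma^3-\frac{v_T(2T+1)^5}{120}\gamma^5,
\]
with $c_0=2(3T+1)-v_T(2T+1)$ and $c_1=(3T+1)+(3T+1)^3-v_T(2T+1)^3$. Both coefficients are positive by Lemma~\ref{imp}: the bound $v_T<(0.345)^{-1}<32/11\le 2(3T+1)/(2T+1)$ gives $c_0>0$, and $[(3T+1)/(2T+1)]^3\ge(16/11)^3>3>v_T$ gives $c_1>0$. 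Viewing the right-hand side as a function of $\gamma$, its derivative is a decreasing polynomial in $\gamma^2$ with positive value $c_0$ at $0$, hence it is unimodal on $[0,\infty)$; so proving the desired positivity on $(0,\pi/(6T+2))$ reduces to verifying positivity at the right endpoint $\gamma=\pi/(6T+2)$.

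The hard part will be this endpoint check: after simplification it amounts to requiring $v_T<V^*(T)$ for an explicit rational threshold whose minimum over $T\ge 5$ is $V^*(5)\approx 2.83$, whereas the bound from Lemma~\ref{imp} is only $v_T<(0.345)^{-1}\approx 2.90$. To close this gap I will sharpen the estimate of $v_5$: from the defining formula $v_5=6\sin(2\pi/17)/\sin(5\pi/17)$, using $\sin(2\pi/17)\le 2\pi/17$ and $\sin(5\pi/17)\ge 5\pi/17-(5\pi/17)^3/6>0.79$, I obtain $v_5<2.80<V^*(5)$. The monotonicity of $v_T$ in $T$ from Lemma~\ref{imp}, combined with the monotonicity of $V^*(T)$ (which increases to $27(48-\pi^2)/[8(54-\pi^2)]\approx 2.92$ as $T\to\infty$), then propagates $v_T\le v_5<V^*(T)$ to all $T\ge 5$, completing the argument.
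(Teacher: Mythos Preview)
Your approach is correct in outline and genuinely different from the paper's. The paper does not use Taylor bounds at all: it replaces $\sin(2T+1)\gamma$ by the slightly larger $\sin(2T+\tfrac{4}{3})\gamma$ (valid since both arguments lie in $(0,\pi/2)$), and then shows that the resulting auxiliary function $g_1$ has negative derivative on the interval. The point of the odd-looking coefficient $2T+\tfrac{4}{3}$ is that with $\beta=(T+\tfrac{2}{3})\gamma$ one gets $(3T+1)\gamma=3\beta$ (approximately, and $\gamma$ itself is negligible), so $g_1'$ becomes a cubic in $\cos\beta$ whose negativity is checked directly. This derivative argument avoids any endpoint computation: once $g_1'<0$ and $g_1(0)=0$, the conclusion is immediate. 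Your Taylor route trades that trigonometric trick for an explicit polynomial lower bound and a unimodality reduction to the right endpoint; it is more mechanical but forces a delicate numerical check at $\gamma=\pi/(6T+2)$.

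Two points in your plan need care. First, your bound $\sin(5\pi/17)>0.79$ gives only $v_5<12\pi/(17\cdot0.79)\approx 2.807$, not $2.80$; use the full Taylor value $5\pi/17-(5\pi/17)^3/6\approx 0.7925$ to get $v_5<2.80$, or simply note that $2.807<V^*(5)\approx 2.826$ already suffices. Second, and more substantively, you assert that $V^*(T)$ is increasing but do not indicate why; this is not automatic, since in the expression $V^*(T)=\dfrac{(3T+1)/(2T+1)\,\bigl[2-\pi^2/24-h^2/6\bigr]}{1-s^2/6+s^4/120}$ (with $h=\pi/(6T+2)$, $s=(2T+1)h$) both numerator and denominator increase with $T$. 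You will need to argue, for instance, that the factor $1/r=(3T+1)/(2T+1)$ and the denominator depend on the same variable $r$ in a way that makes the quotient monotone. (Your stated limit $27(48-\pi^2)/[8(54-\pi^2)]$ omits the $s^4/120$ term; the correct limit is about $2.88$, though this does not affect the argument.) Note that the paper, too, uses a sharper bound on $v_5$ than Lemma~\ref{imp} provides, so needing $v_5<2.80$ rather than $v_5<(0.345)^{-1}$ is not a defect of your method.
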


\begin{proof}
Note that
\begin{align*}
\frac{2T+\frac{4}{3}}{3T+1}v_T\leq \frac{10+\frac{4}{3}}{16}v_{5}<\frac{17}{12\cdot 0.736} <1.93,
\end{align*}
whence, for $\beta:=(2T+4/3)/2\g$,
\begin{align*}
g_1'&(\g):=(v_T\sin(2T+4/3)\g-(3T+1)\sin\g-\sin(3T+1)\g)'\\
&<(3T+1)(1.93\cos 2\beta-2\cos^2(1.5\beta))=(3T+1)(-4\cos^3\beta+3.86\cos^2\beta+3\cos\beta-2.93)<0,
\end{align*}
as $\beta\leq \pi/6$, which completes the proof, since our function does not exceed $g_1(\g)$.
\end{proof}

2. Now let $\pi/(6T+2)\leq\g< \pi/(3T+1)$. Since $\g<\pi/10$, there holds $s_{\g}:=\g/\sin \g<1.02$. Denoting $\eta:=(3T+1)\sin\g\geq 1.5$, we obtain due to Lemma \ref{imp}
\begin{align*}
\tilde{f}(\g)\leq 1-\eta^2+v_T(\eta-1)\sin\frac{2s_{\g}\eta}{3}\leq (1-\eta)\Big(1+\eta-\frac{1}{0.34}\sin\frac{2s_{\g}\eta}{3}\Big)=:(1-\eta)g_2(\eta).
\end{align*}
We have
\begin{align*}
g_2'(\eta)=1-\frac{2s_{\g}}{1.02}\cos\frac{2s_{\g}\eta}{3}>1-2\cdot 0.5=0,
\end{align*}
while 
\begin{align*}
g_2\Big((3T+1)\sin\frac{\pi}{6T+2}\Big)\geq 1+\frac{\pi}{2s_{\g}}-v_T\cdot\frac{\sqrt{3}}{2}>1+\frac{\pi}{2.04}-\frac{\sqrt{3}}{0.69}>0,
\end{align*}
which yields that $\tilde{f}(\g)<0$.

3. Consider next $\pi/(3T+1)\g< 3\pi/(6T+2)$. Denoting for simplicity $\delta:=\pi/3,\;\beta:=3(3T+1)\g/\pi$, we observe that $(3T+1)\sin\g\geq \beta\geq 3$ and get by Lemma \ref{imp}

\begin{align*}
\tilde{f}(\g)&\leq 1-(3T+1)\sin\g(\beta- v_T\sin\frac{2\delta\beta}{3})+2u_T\frac{7}{6}\sin \delta\beta\\
&\leq 1-\beta^2+\frac{\beta}{0.34}\sin\frac{2\delta\beta}{3}+\frac{35}{6}\sin \delta\beta=:g_3(\beta).
\end{align*}
Further,
\begin{align*}
g'_3(\beta)=-2\beta+\frac{35}{6}\delta\cos \delta\beta+\frac{1}{0.34}\sin\frac{2\delta\beta}{3}+\frac{\delta\beta}{0.34}\cos\frac{2\delta\beta}{3}<-2\beta+3\sin\frac{2\delta\beta}{3}<0,
\end{align*}
whence to prove that $\tilde{f}(\g)<0$, it remains to show that
\begin{align*}
g_3(3)<-8+9\frac{\sqrt{3}}{2}<0.
\end{align*}

4. Finally, let $\g\geq 3\pi/(6T+2)$. Then $\eta=(3T+1)\sin\g\geq 4.5$ and in light of Lemma \ref{imp} we obtain
\begin{align*}
\tilde{f}(\g)=1-\eta(\eta-v_T)+(2T+1)u_T\leq 1-4.5\Big(4.5-\frac{1}{0.34}\Big)+\frac{7}{3}\cdot\frac{5}{2}<0.
\end{align*}
So, we proved that $\tilde{f}(\g)<0$, which means that $f_{\g}(-(3T+1)\sin\g/\sin(3T+1)\g)< 0$.

Thus, in each of Cases 1--3 we came to contradictions, which yields that $f_{\g}$ does not have a root of modulus $1$. Hence, our assumption was wrong and $S_4^{(T)}$ is injective in the unit circle. This concludes the proof of the theorem.
\end{proof}

With Theorem \ref{univ} in hand, we are in a position to prove 

\begin{prop}\label{q-e} The polynomials $S_{4}^{(T)}$ are quasi-extremal in the sense of Ruscheweyh.
\end{prop}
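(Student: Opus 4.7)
The plan is to deduce the quasi-extremality as a direct corollary of Theorem \ref{univ}, with essentially no further analysis of $S_4^{(T)}$ itself. According to Ruscheweyh's definition [Ru, Def.~11], a polynomial $P(z) = z + \sum_{j=2}^{N} a_j z^j$ is quasi-extremal precisely when it is univalent in $\mathbb{D}$ and attains the extremal leading-coefficient bound $|a_N| = 1/N$; equivalently, all $N-1$ critical points of $P$ must lie on $\partial\mathbb{D}$. So the task reduces to checking these two conditions for $S_4^{(T)}$.

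I would carry the argument out in two short steps. First, I read off from the closed-form expansion of $S_4^{(T)}$ displayed immediately after Theorem \ref{univ} that the degree is $N = 3T+1$ and the leading coefficient is $\frac{1}{3T+1}$, so the equality $|a_N| = 1/N$ holds automatically; this was essentially already observed in the introduction. Second, I invoke Theorem \ref{univ} to supply the univalence in $\mathbb{D}$. Together, these two facts verify Ruscheweyh's definition and deliver the claim.

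I do not expect any real obstacle here, since all the difficulty has been absorbed into Theorem \ref{univ}; the only point requiring a bit of care is matching Ruscheweyh's definition as used in [DGS] with the extremal coefficient formulation. As a reassuring cross-check, I would note the equivalent geometric form: the derivative $(S_4^{(T)})'$ is a polynomial of degree $3T$ with constant term $1$ and leading coefficient $(3T+1)\cdot\frac{1}{3T+1} = 1$, so the product of its $3T$ zeros has modulus $1$; by Theorem \ref{univ} none of these zeros lies in $\mathbb{D}$, hence all of them must lie on the unit circle. This is the standard critical-point manifestation of quasi-extremality and can be cited directly if that formulation is preferred in the exposition.
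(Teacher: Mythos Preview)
Your approach differs from the paper's. The paper does not appeal to the leading-coefficient characterization at all; instead it writes out the derivative explicitly as
\[
(S_4^{(T)})'(z)=(1+z^T)\Bigl(1+\Bigl(\tfrac{2T+1}{3T+1}v_T-1\Bigr)z^T+z^{2T}\Bigr),
\]
and then uses Lemma~\ref{imp} to check that the middle coefficient $a=\tfrac{2T+1}{3T+1}v_T-1$ satisfies $-1<a<2$, so that $1+ax+x^{2}$ has two distinct complex conjugate roots of modulus $1$. This shows directly that all $3T$ critical points are \emph{distinct} and lie on $\partial\mathbb{D}$, which is the criterion the paper cites from \cite[p.~284]{Ru}. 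Your product-of-zeros argument is a neat shortcut for placing the critical points on the unit circle, and it avoids both the factorization and the appeal to Lemma~\ref{imp}.

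The one point you should shore up is distinctness. The paper's invoked criterion reads ``all the zeros of $(S_4^{(T)})'$ are different and lie on the unit circle,'' and the paper takes care to verify the ``different'' part; your argument establishes only that each zero has modulus $1$. Your assertion that Ruscheweyh's Definition~11 is \emph{exactly} ``univalent in $\mathbb{D}$ with $|a_N|=1/N$'' (with no simplicity requirement on the critical points) is doing real work here and needs to be checked against \cite{Ru} rather than asserted. If that equivalence holds as stated, your proof is complete and cleaner than the paper's; if instead the criterion on p.~284 genuinely requires simple zeros, you still need either the factorization above or some other argument to rule out multiple critical points, since the product-of-moduli trick cannot detect multiplicity.
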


\begin{proof}
As we have already proved that $S_{4}^{(T)}$ are univalent in the unit disk, it suffices to show that all the zeros of $(S_4^{(T)})'$ are different and lie on the unit circle (see \cite[p. 284]{Ru}). We obtain
\begin{align*}
(S_4^{(T)})'(z)&=(1+z^{T})\Big(1+\Big(\frac{2T+1}{3T+1}v_T-1\Big)z^{T}+z^{2T}\Big)=:(1+z^{T})(1+az^{T}+z^{2T}).
\end{align*}
Note that $|a|<2$, since due to Lemma \ref{imp}
\begin{align*}
-1<a=\frac{2T+1}{3T+1}v_T-1< 3\frac{2T+1}{3T+1}-1<2,
\end{align*}
so that the quadratic polynomial $1+ax+x^2$ has two different complex zeros of modulus $1$. Hence, all the zeros of $(S_4^{(T)})'$ are different and have modulus $1$, as desired.
\end{proof}

\begin{acknowledgements} I am grateful to Alex Stokolos for bringing my attention to the problem and acquainting me with related questions.
\end{acknowledgements}


\begin{thebibliography}{24}
\bibitem{CR1}
A. C\'ordova, St. Ruscheweyh, {\it On maximal ranges of polynomial spaces in the unit disk}, Constr. Approx. 5  (1989), 309--328.

\bibitem{CR2}
A. C\'ordova, St. Ruscheweyh, {\it On maximal polynomial ranges in circular domains}, Compl. Var. 10  (1988),
295--309.

\bibitem{DGS}
D. Dmitrishin, D. Gray, A. Stokolos, {\it Some extremal problems for trinomials with fold symmetry}, Anal. Math. Phys. 12(88)  (2022).

\bibitem{DST}
D. Dmitrishin, A. Stokolos, M. Tohaneanu, {\it Searching for cycles in non-linear autonomous discrete dynamical systems}, N.Y. J. Math. 25  (2019), 603--626.

\bibitem{Ru}
C. Genthner, St. Ruscheweyh, L. Salinas, {\it A criterion for quasi-simple plane curves}, Comput. Methods
Funct. Theory 2(1)  (2003), 281--291.

\bibitem{S}
T. Suffridge, {\it On univalent polynomials}, J. London Math. Soc. 44  (1969), 496--504.
\end{thebibliography}
\end{document}